\newtheorem{theorem}{Theorem}
\newtheorem{definition}[theorem]{Definition}
\newtheorem{remark}[theorem]{Remark}
\newenvironment{proof}[1][Proof]{\noindent\textbf{#1.} }{\ \rule{0.5em}{0.5em}}
\numberwithin{theorem}{section}
\numberwithin{equation}{section}
\begin{document}

\title{Jet Riemann-Lagrange Geometry and Some Applications in Theoretical
Biology}
\author{Ileana Rodica Nicola and Mircea Neagu}
\date{}
\maketitle

\begin{abstract}
The aim of this paper is to construct a natural Riemann-Lagrange
differential geometry on 1-jet spaces, in the sense of nonlinear
connections, generalized Cartan connections, d-torsions, d-curvatures, jet
electromagnetic fields and jet electromagnetic Yang-Mills energies, starting
from some given nonlinear evolution ODEs systems modelling biologic
phenomena like the cancer cell population model or the infection by human
immunodeficiency virus-type 1 (HIV-1) model.
\end{abstract}

\textbf{Mathematics Subject Classification (2000):} 53C43, 53C07, 83C22.

\textbf{Key words and phrases:} 1-jet spaces, jet least squares Lagrangian
functions, jet Riemann-Lagrange geometry, cancer cell population evolution
model, infection by HIV-1 evolution model.

\section{Historical geometrical aspects}

\hspace{5mm}The Riemann-Lagrange geometry [5] of the 1-jet space $J^{1}(T,M)$%
, where $T$ is a smooth \textit{\textquotedblright
multi-time\textquotedblright } manifold of dimension $p$ and $M$ is a smooth 
\textit{\textquotedblright spatial\textquotedblright } manifold of dimension 
$n$, contains many fruitful ideas for the geometrical interpretation of the
solutions of a given ODEs or PDEs system [7]. In this direction, authors
like P.J. Olver [11] or C. Udri\c{s}te [18] agreed that many applicative
problems coming from Physics [11], [17], Biology [6], [9] or Economics [18]
can be modelled on 1-jet spaces.

In such an aplicative-geometrical context, a lot of authors (G.S. Asanov
[1], D. Saunders [15], A. Vondra [19] and many others) studied the \textit{%
contravariant differential geometry} of 1-jet spaces. Moreover, proceeding
with the geometrical studies of G.S. Asanov [1], the second author of this
paper has recently elaborated that so-called the \textit{Riemann-Lagrange
geometry of 1-jet spaces} [5], which is a natural extension on 1-jet spaces
of the already well known \textit{Lagrangian geometry of the tangent bundle}
due to R. Miron and M. Anastasiei [4]. We emphasize that the
Riemann-Lagrange geometry of the 1-jet spaces allow us to regard the
solutions of a given ODEs (respectively PDEs) system as \textit{horizontal
geodesics} [17] (respectively, \textit{generalized harmonic maps} [7]) in a
convenient Riemann-Lagrange geometrical structure. In this way, it was given
a final solution for an open problem suggested by H. Poincar\'{e} [13] (%
\textit{find the geometrical structure which transforms the field lines of a
given vector field into geodesics}) and generalized by C. Udri\c{s}te [17] (%
\textit{find the geometric structure which converts the solutions of a given
first order PDEs system into some harmonic maps}).

In the following, let us present the main geometrical ideas used by C. Udri%
\c{s}te in order to solve the open problem of H. Poincar\'{e}. For more
details, please see the works [17] and [18].

For this purpose, let us consider a Riemannian manifold $(M^{n},\varphi
_{ij}(x))$ and let us fix an arbitrary vector field $X=(X^{i}(x))$ on $M$.
Obviously, the vector field $X$ produces the first order ODEs system (%
\textit{dynamical system}) 
\begin{equation}
\frac{dx^{i}}{dt}=X^{i}(x(t)),\text{ }\forall \text{ }i=\overline{1,n}.
\label{ODEs}
\end{equation}

Using the Riemannian metric $\varphi _{ij}$ and its Christoffel symbols $%
\gamma _{jk}^{i}$ and differentiating the first order ODEs system (\ref{ODEs}%
), after a convenient rearranging of the terms involved, C. Udri\c{s}te
constructed a second order prolongation (\textit{single-time geometric
dynamical system}) of the ODEs system (\ref{ODEs}), which has the form 
\begin{equation}
\frac{d^{2}x^{i}}{dt^{2}}+\gamma _{jk}^{i}\frac{dx^{j}}{dt}\frac{dx^{k}}{dt}%
=F_{j}^{i}\frac{dx^{j}}{dt}+\varphi ^{ih}\varphi _{kj}X^{j}\nabla _{h}X^{k},%
\text{ }\forall \text{ }i=\overline{1,n},  \label{SODEs}
\end{equation}%
where $\nabla $ is the Levy-Civita connection of the Riemannian manifold $%
(M,\varphi )$ and 
\begin{equation*}
F_{j}^{i}=\nabla _{j}X^{i}-\varphi ^{ih}\varphi _{kj}\nabla _{h}X^{k}
\end{equation*}%
is a $(1,1)$-tensor field which represents the \textit{helicity }of the
vector field $X$.

It is easy to see that any solution of class $C^{2}$ of the first order ODEs
system (\ref{ODEs}) is also a solution for the second order ODEs system (\ref%
{SODEs}). Conversely, this statement is not true.

\begin{remark}
The importance of the second order ODEs system (\ref{SODEs}) comes from its
equivalency with the Euler-Lagrange equations of that so-called the \textbf{%
least squares Lagrangian function}%
\begin{equation*}
L_{ls}:TM\rightarrow \mathbb{R}_{+},
\end{equation*}%
given by 
\begin{equation}
L_{ls}(x,y)=\frac{1}{2}\varphi _{ij}(x)\left[ y^{i}-X^{i}(x)\right] \left[
y^{j}-X^{j}(x)\right] .  \label{LS1}
\end{equation}

Note that the field lines of class $C^{2}$ of the vector field $X$ are the 
\textit{global minimum points} of the \textbf{least squares energy action}%
\begin{equation*}
\mathbb{E}_{ls}(x(t))=\int_{a}^{b}L_{ls}(x^{k}(t),\dot{x}^{k}(t))dt.
\end{equation*}

As a conclusion, the field lines of class $C^{2}$ of the vector field $X$
are solutions of the Euler-Lagrange equations produced by $L_{ls}.$ Because
the Euler-Lagrange equations of $L_{ls}$ are exactly the equations (\ref%
{SODEs}), C. Udri\c{s}te claims that the solutions of class $C^{2}$ of the
first order ODEs system (\ref{ODEs}) are \textbf{horizontal geodesics} on
the \textbf{Riemann-Lagrange manifold} [17]%
\begin{equation*}
(\mathbb{R}\times M,1+\varphi ,N(_{1}^{i})_{j}=\gamma
_{jk}^{i}y^{k}-F_{j}^{i}).
\end{equation*}
\end{remark}

\section{Riemann-Lagrange geometrical background on 1- jet spaces}

\hspace{4mm} In this Section, we regard the given first order nonlinear ODEs
system (\ref{ODEs}) as an ordinary differential system on an 1-jet space $%
J^{1}(T,\mathbb{R}^{n})$, where $T\subset \mathbb{R}$. Moreover, starting
from Udri\c{s}te's geometrical ideas, we construct some jet Riemann-Lagrange
geometrical objects (nonlinear connections, generalized Cartan connections,
d-torsions, d-curvatures, jet electromagnetic fields and jet electromagnetic
Yang-Mills energies) which, in our opinion, characterize from a geometrical
point of view the given nonlinear ODEs system of order one.

In this direction, let $T=[a,b]\subset \mathbb{R}$ be a compact interval of
the set of real numbers and let us consider the jet fibre bundle of order
one 
\begin{equation*}
J^{1}(T,\mathbb{R}^{n})\rightarrow T\times \mathbb{R}^{n},\text{ }n\geq 2,
\end{equation*}%
whose local coordinates $(t,x^{i},x_{1}^{i}),$ $i=\overline{1,n},$ transform
by the rules 
\begin{equation}
\widetilde{t}=\widetilde{t}(t),\text{ }\widetilde{x}^{i}=\widetilde{x}%
^{i}(x^{j}),\text{ }\widetilde{x}_{1}^{i}=\frac{\partial \widetilde{x}^{i}}{%
\partial x^{j}}\frac{dt}{d\widetilde{t}}\cdot x_{1}^{j}.
\label{tr_rules_jet}
\end{equation}

\begin{remark}
From a physical point of view, the coordinate $t$ has the physical meaning
of \textbf{relativistic time}, the coordinate $x=(x^{i})_{i=\overline{1,n}}$
represents the \textbf{spatial coordinate} and the coordinate $%
y=(x_{1}^{i})_{i=\overline{1,n}}$ has the physical meaning of \textbf{%
direction }or \textbf{relativistic velocity}. Thus, the coordinate $y$ is
intimately connected with the physical concept of \textbf{anisotropy}.
\end{remark}

Let us consider $X=\left( X_{(1)}^{(i)}(x^{k})\right) $ be an arbitrary 
\textit{d-tensor field} on the 1-jet space $J^{1}(T,\mathbb{R}^{n})$, whose
local components transform by the rules 
\begin{equation*}
\widetilde{X}_{(1)}^{(i)}=\frac{\partial \widetilde{x}^{i}}{\partial x^{j}}%
\frac{dt}{d\widetilde{t}}\cdot X_{(1)}^{(j)}.
\end{equation*}

Clearly, the d-tensor field $X$ produces the jet ODEs system of order one (%
\textit{jet dynamical system}) 
\begin{equation}
x_{1}^{i}=X_{(1)}^{(i)}(x^{k}(t)),\text{ }\forall \text{ }i=\overline{1,n},
\label{DEs1}
\end{equation}%
where $x(t)=(x^{i}(t))$ is an unknown curve on $\mathbb{R}^{n}$ (i. e., a 
\textit{jet field line} of the d-tensor field $X$) and we use the notation%
\begin{equation*}
x_{1}^{i}\overset{not}{=}\frac{dx^{i}}{dt},\text{ }\forall \text{ }i=%
\overline{1,n}.
\end{equation*}

\begin{remark}
The main and refined difference between the ODEs systems (\ref{ODEs}) and (%
\ref{DEs1}), which have the same form, consists only in their invariance
transformation groups. Thus, the ODEs system (\ref{ODEs}) is invariant under
the transformation group $\widetilde{x}^{i}=\widetilde{x}^{i}(x^{j})$ while
the ODEs system (\ref{DEs1}) is invariant under the transformation group $%
\widetilde{t}=\widetilde{t}(t)$, $\widetilde{x}^{i}=\widetilde{x}^{i}(x^{j})$
which does not ignore the temporal reparametrizations.
\end{remark}

Now, let us consider the Euclidian structures $(T,1)$ and $(\mathbb{R}%
^{n},\delta _{ij})$, where $\delta _{ij}$ are the Kronecker symbols. Using
as a pattern the Udri\c{s}te's geometrical ideas, we underline that the jet
first order ODEs system (\ref{DEs1}) automatically produces the \textit{jet
least squares Lagrangian function} 
\begin{equation*}
JL_{ls}:J^{1}(T,\mathbb{R}^{n})\rightarrow \mathbb{R}_{+},
\end{equation*}%
expressed by 
\begin{equation}
JL_{ls}(x^{k},x_{1}^{k})=\sum_{i=1}^{n}\left[ x_{1}^{i}-X_{(1)}^{(i)}(x)%
\right] ^{2}.  \label{JetLS}
\end{equation}

Because the \textit{global minimum points} of the \textit{jet least squares
energy action} 
\begin{equation*}
\mathbb{JE}_{ls}(c(t))=\int_{a}^{b}JL_{ls}\left( x^{k}(t),\frac{dx^{k}}{dt}%
\right) dt
\end{equation*}%
are exactly the solutions of class $C^{2}$ of the jet first order ODEs
system (\ref{DEs1}), it follows that the solutions of class $C^{2}$ of the
jet dynamical system (\ref{DEs1}) verify the second order \textit{%
Euler-Lagrange equations} produced by $JL_{ls}$ (\textit{jet geometric
dinamics}), namely%
\begin{equation}
\frac{\partial \left[ JL_{ls}\right] }{\partial x^{i}}-\frac{d}{dt}\left( 
\frac{\partial \left[ JL_{ls}\right] }{\partial x_{1}^{i}}\right) =0,\text{ }%
\forall \text{ }i=\overline{1,n}.  \label{E-L-J}
\end{equation}

In conclusion, because of all the arguments exposed above we believe that we
may regard the jet least squares Lagrangian function $JL_{ls}$ as a natural
geometrical substitut on the 1-jet space $J^{1}(T,\mathbb{R}^{n})$ for the
jet first order ODEs system (\ref{DEs1}).

\begin{remark}
A Riemann-Lagrange geometry on $J^{1}(T,\mathbb{R}^{n})$ produced by the jet
least squares Lagrangian function $JL_{ls}$, via its second order
Euler-Lagrange equations (\ref{E-L-J}), in the sense of nonlinear
connection, generalized Cartan connection, d-torsions, d-curvatures, jet
electromagnetic field and jet Yang-Mills electromagnetic energy, is now
completely done in the works [5], [6] and [7].
\end{remark}

In this geometric background, we introduce the following concept:

\begin{definition}
Any geometrical object on the 1-jet space $J^{1}(T,\mathbb{R}^{n})$, which
is produced by the jet least squares Lagrangian function $JL_{ls}$, via its
Euler-Lagrange equations (\ref{E-L-J}), is called \textbf{geometrical object
produced by the jet first order ODEs system (\ref{DEs1})}.
\end{definition}

In this context, we give the following geometrical result (this is proved in
the works [6] and [9] and, for the multi-time general case, in the paper
[7]) which characterizes the jet first order ODEs system (\ref{DEs1}). For
all details, the reader is invited to consult the book [5].

\begin{theorem}
\label{MainTh}(i) The \textbf{canonical nonlinear connection on }$J^{1}(T,%
\mathbb{R}^{n})$\textbf{\ produced by the jet first order ODEs system (\ref%
{DEs1})} is%
\begin{equation*}
\Gamma =\left( 0,N_{(1)j}^{(i)}\right) ,
\end{equation*}%
whose local components $N_{(1)j}^{(i)}$ are the entries of the matrix 
\begin{equation*}
N_{(1)}=\left( N_{(1)j}^{(i)}\right) _{i,j=\overline{1,n}}=-\frac{1}{2}\left[
J\left( X_{(1)}\right) -\text{ }^{T}J\left( X_{(1)}\right) \right] ,
\end{equation*}%
where 
\begin{equation*}
J\left( X_{(1)}\right) =\left( \frac{\partial X_{(1)}^{(i)}}{\partial x^{j}}%
\right) _{i,j=\overline{1,n}}
\end{equation*}%
is the Jacobian matrix.

(ii) All adapted components of the \textbf{canonical generalized Cartan
connection }$C\Gamma $\textbf{\ produced by the jet first order ODEs system (%
\ref{DEs1})} vanish.

(iii) The effective adapted components $R_{(1)jk}^{(i)}$ of the \textbf{%
torsion} d-tensor \textbf{T} of the canonical generalized Cartan connection $%
C\Gamma $ \textbf{produced by the jet first order ODEs system (\ref{DEs1})}
are the entries of the matrices 
\begin{equation*}
R_{(1)k}=\frac{\partial }{\partial x^{k}}\left[ N_{(1)}\right] ,\text{ }%
\forall \text{ }k=\overline{1,n},
\end{equation*}%
where 
\begin{equation*}
R_{(1)k}=\left( R_{(1)jk}^{(i)}\right) _{i,j=\overline{1,n}},\text{ }\forall 
\text{ }k=\overline{1,n}.
\end{equation*}

(iv) All adapted components of the \textbf{curvature} d-tensor \textbf{R} of
the canonical generalized Cartan connection $C\Gamma $ \textbf{produced by
the jet first order ODEs system (\ref{DEs1})} vanish.

(v) The \textbf{geometric electromagnetic distinguished 2-form produced by
the jet first order ODEs system (\ref{DEs1})} has the expression 
\begin{equation*}
\mathbf{F}=F_{(i)j}^{(1)}\delta x_{1}^{i}\wedge dx^{j},
\end{equation*}%
where 
\begin{equation*}
\delta x_{1}^{i}=dx_{1}^{i}+N_{(1)k}^{(i)}dx^{k},\text{ }\forall \text{ }i=%
\overline{1,n},
\end{equation*}%
and the adapted components $F_{(i)j}^{(1)}$ are the entries of the matrix 
\begin{equation*}
F^{(1)}=\left( F_{(i)j}^{(1)}\right) _{i,j=\overline{1,n}}=-N_{(1)}.
\end{equation*}

(vi) The adapted components $F_{(i)j}^{(1)}$ of the geometric
electromagnetic d-form $\mathbf{F}$ produced by the jet first order ODEs
system (\ref{DEs1})\textbf{\ }verify the \textbf{generalized Maxwell
equations} 
\begin{equation*}
\sum_{\{i,j,k\}}F_{(i)j||k}^{(1)}=0,
\end{equation*}%
where $\sum_{\{i,j,k\}}$ represents a cyclic sum and 
\begin{equation*}
F_{(i)j||k}^{(1)}=\frac{\partial F_{(i)j}^{(1)}}{\partial x^{k}}
\end{equation*}%
means the horizontal local covariant derivative produced by the Berwald
connection $B\Gamma _{0}$ on $J^{1}(T,\mathbb{R}^{n}).$ For more details,
please consult [5].

(vii) The \textbf{geometric jet Yang-Mills energy produced by the jet first
order ODEs system (\ref{DEs1})} is given by the formula 
\begin{equation*}
\mathbf{EYM}(x)=\frac{1}{2}\cdot Trace\left[ F^{(1)}\cdot \text{ }^{T}F^{(1)}%
\right] =\sum_{i=1}^{n-1}\sum_{j=i+1}^{n}\left[ F_{(i)j}^{(1)}\right] ^{2}.
\end{equation*}
\end{theorem}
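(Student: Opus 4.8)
The plan is to specialize the general jet Riemann-Lagrange machinery to the least squares Lagrangian $JL_{ls}$ of (\ref{JetLS}), exploiting two structural features that make the computation tractable: its fundamental vertical metric is the constant Euclidean tensor, and its canonical nonlinear connection turns out to be a skew-symmetric matrix depending only on $x$. Every item (i)--(vii) will then cascade from these two facts.

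First I would compute the fundamental metrical $d$-tensor $g_{ij}=\frac{1}{2}\,\partial^{2}JL_{ls}/\partial x_{1}^{i}\partial x_{1}^{j}$. Differentiating (\ref{JetLS}) twice gives $g_{ij}=\delta_{ij}$, so the spatial metric is flat; combined with the constant temporal metric $1$ on $T=[a,b]$, all Christoffel symbols (spatial and temporal) vanish. This already forces the temporal component of the canonical nonlinear connection to be zero, which accounts for the leading ``$0$'' in $\Gamma=(0,N_{(1)j}^{(i)})$.

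For part (i) I would write out the Euler-Lagrange equations (\ref{E-L-J}) explicitly. Computing $\partial JL_{ls}/\partial x^{i}$ and $\frac{d}{dt}\bigl(\partial JL_{ls}/\partial x_{1}^{i}\bigr)$ along a curve and rearranging, the resulting second order system takes the semispray form $\ddot{x}^{i}+2G_{(1)}^{(i)}(x,x_{1})=0$, in which $2G_{(1)}^{(i)}$ splits into a part linear in the velocity $x_{1}^{j}$ with coefficient the skew Jacobian $-\bigl(\partial X_{(1)}^{(i)}/\partial x^{j}-\partial X_{(1)}^{(j)}/\partial x^{i}\bigr)$, plus a velocity-free gradient term $\sum_{j}X_{(1)}^{(j)}\,\partial X_{(1)}^{(j)}/\partial x^{i}$. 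The canonical nonlinear connection is then read off as $N_{(1)j}^{(i)}=\partial G_{(1)}^{(i)}/\partial x_{1}^{j}$; since the velocity-free term is annihilated by $\partial/\partial x_{1}^{j}$, only the linear part contributes and one obtains exactly $N_{(1)}=-\frac{1}{2}\bigl[J(X_{(1)})-{}^{T}J(X_{(1)})\bigr]$. This identification is the main obstacle: one must apply the correct convention relating a jet Lagrangian's semispray to its canonical nonlinear connection, and must notice that the inhomogeneous term, although it genuinely alters the geodesic equations, drops out of $N_{(1)}$.

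With $g_{ij}=\delta_{ij}$ constant and $N_{(1)}$ fixed, the remaining parts are formal. For (ii), every adapted coefficient of the generalized Cartan connection $C\Gamma$ is assembled from $h$- and $v$-derivatives of $g_{ij}$, all of which vanish for a constant metric. For (iv), the curvature $d$-tensors of $C\Gamma$ are polynomials in the now-null Cartan coefficients and their adapted derivatives, hence vanish; this is consistent with (iii) because in each curvature formula the torsion $R_{(1)jk}^{(i)}$ enters only multiplied by the vanishing vertical coefficients. For (iii) I would note that $N_{(1)j}^{(i)}$ is independent of $x_{1}^{k}$, so the adapted horizontal derivative $\delta/\delta x^{k}$ collapses to $\partial/\partial x^{k}$; since the $L$ and $C$ coefficients vanish, the only surviving (``effective'') torsion pieces are the matrices $R_{(1)k}=\partial N_{(1)}/\partial x^{k}$. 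Finally, parts (v)--(vii) follow from the skew-symmetry of the matrix $N_{(1)}$: the metrical $h$-deflection reduces to $-N_{(1)j}^{(i)}$ and its skew part yields $F^{(1)}=-N_{(1)}$; the Maxwell identity (vi) is a cyclic sum of second partials of $X_{(1)}$ that cancels by Schwarz's theorem once the Berwald covariant derivative is seen to reduce to $\partial/\partial x^{k}$; and the Yang-Mills energy (vii) comes from $\mathrm{Trace}\bigl[F^{(1)}\,{}^{T}F^{(1)}\bigr]=\sum_{i,j}\bigl(F_{(i)j}^{(1)}\bigr)^{2}=2\sum_{i<j}\bigl(F_{(i)j}^{(1)}\bigr)^{2}$, again using skew-symmetry.
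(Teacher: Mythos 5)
The paper itself gives no proof of this theorem, deferring to the references [5], [6], [7] and [9]; your reconstruction follows exactly the route those sources (and the paper's later worked examples) intend: the vertical metric of $JL_{ls}$ is $g_{ij}=\delta_{ij}$, the nonlinear connection is read off the Euler--Lagrange semispray as $N_{(1)j}^{(i)}=\partial G_{(1)}^{(i)}/\partial x_{1}^{j}$, and items (ii)--(vii) collapse because the metric is constant and $N_{(1)}$ is skew-symmetric and independent of $x_{1}$. Your argument is correct; the only blemish is an immaterial sign on the velocity-free part of $2G_{(1)}^{(i)}$, which should read $-\sum_{j}X_{(1)}^{(j)}\,\partial X_{(1)}^{(j)}/\partial x^{i}$, but, as you yourself observe, this term is annihilated by $\partial/\partial x_{1}^{j}$ and so affects neither $N_{(1)}$ nor anything downstream.
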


In the next Sections, we apply the above jet Riemann-Lagrange geometrical
results to certain evolution equations from Theoretical Biology that govern
two of the most actual diseases of our times, namely the spread of cancer
cells in vivo and the infection by human immunodeficiency virus-type 1
(HIV-1). We sincerely hope that our geometrical approach of these evolution
equations to give useful mathematical informations for biologists.

\begin{remark}
For more geometrical methods applied to mathematical models coming from
Theoretical Biology, the reader is invited to consult the book [9].
\end{remark}

\section{Jet Riemann-Lagrange geometry for a cancer cell population model in
biology}

\hspace{5mm}The mathematical model of cancer cell population, which consists
of a two dimensional system of ODEs with four parameters, was introduced in
2006 by Garner et al. [2].

It is well known that cancer cell populations consist of a combination of
proliferating, quiescent and dead cells that determine tumor growth or
cancer spread [3]. Moreover, recent research in cancer progression and
treatment indicates that many forms of cancer arise from one abnormal cell
or a small subpopulation of abnormal cells [14]. These cells, which support
cancer growth and spread are called \textit{cancer stem cells} (CSCs).
Targeting these CSCs is crucial because they display many of the same
characteristics as healthy stem cells, and they have the capacity of
initiating new tumors after long periods of remmision. The understanding of
cancer mechanism could have a significant impact on cancer treatment
approaches as it emphasizes the importance of targeting diverse cell
subpopulations at a specific stage of development.

The nondimensionalized model introduced by Garner et al. is based on a
system of Solyanik et al. [16], which starts from the following assumptions:

\begin{enumerate}
\item the cancer cell population consists of proliferating and quiescent
(resting) cells;

\item the cells can lose their ability to divide under certain conditions
and then transit from the proliferating to the resting state;

\item resting cells can either return to the proliferating state or die.
\end{enumerate}

The dynamical system has two state variables, namely $P$ - \textit{the
number of proliferating cells} and $Q$ - \textit{the number of quiescent
cells}, and their evolution in time is described by the following
differential equations (\textit{cancer cell population flow}) 
\begin{equation}
\left\{ 
\begin{array}{l}
\dfrac{dP}{dt}=P-P(P+Q)+F(P,Q),\medskip \\ 
\dfrac{dQ}{dt}=-rQ+aP(P+Q)-F(P,Q),%
\end{array}%
\right.  \label{cancer}
\end{equation}%
\begin{equation*}
F(P,Q)=\dfrac{hPQ}{1+kP^{2}},\ r=\frac{d}{b},\ h=\frac{A}{ac},\ k=\frac{%
Bb^{2}}{c^{2}},
\end{equation*}

where

\begin{itemize}
\item $a$ - is a dimensionless constant that measures the relative nutrient
uptake by resting and proliferating cells;

\item $b$ - is the rate of cell division of the proliferating cells;

\item $c$ - depends on the intensity of consumption by proliferating cells
and gives the magnitude of the rate of cell transition from the
proliferating stage to the resting stage in per cell per day;

\item $d$ - is the rate of cell death of the resting cells (per day);

\item $A$ - represents the initial rate of increase in the intensity of cell
transition from the quiescent to proliferating state at small $P$;

\item $A/B$ - represents the rate of decrease in the intensity of cell
transition from the quiescent to proliferating state when $P$ becomes larger.
\end{itemize}

\medskip The Riemann-Lagrange geometrical behavior on the 1-jet space $%
J^{1}(T,\mathbb{R}^{2})$ of the \textit{cancer cell population flow} is
described in the following result:

\begin{theorem}
(i) The \textbf{canonical nonlinear connection on }$J^{1}(T,\mathbb{R}^{2})$%
\textbf{\ produced by the cancer cell population flow (\ref{cancer})} has
the local components 
\begin{equation*}
\hat{\Gamma}=\left( 0,\hat{N}_{(1)j}^{(i)}\right) ,\text{ }i,j=\overline{1,2}%
,
\end{equation*}%
where, if 
\begin{equation*}
F_{P}=\dfrac{hQ\left( 1-kP^{2}\right) }{\left( 1+kP^{2}\right) ^{2}}\text{
and }F_{Q}=\dfrac{hP}{1+kP^{2}}
\end{equation*}%
are the first partial derivatives of the function $F$, then we have%
\begin{equation*}
\begin{array}{l}
\hat{N}_{(1)1}^{(1)}=\hat{N}_{(1)2}^{(2)}=0,\medskip \\ 
\hat{N}_{(1)2}^{(1)}=-\hat{N}_{(1)1}^{(2)}=\dfrac{1}{2}\left[ \left(
2a+1\right) P+aQ-\left( F_{P}+F_{Q}\right) \right] =\medskip \\ 
=\dfrac{1}{2}\left[ \left( 2a+1\right) P+aQ-\dfrac{hQ\left( 1-kP^{2}\right) 
}{\left( 1+kP^{2}\right) ^{2}}-\dfrac{hP}{1+kP^{2}}\right] .%
\end{array}%
\end{equation*}

(ii) All adapted components of the \textbf{canonical generalized Cartan
connection }$C\hat{\Gamma}$\textbf{\ produced by the cancer cell population
flow (\ref{cancer})} vanish.

(iii) All adapted components of the \textbf{torsion} d-tensor \textbf{\^{T}}
of the canonical generalized Cartan connection $C\hat{\Gamma}$ \textbf{%
produced by the cancer cell population flow (\ref{cancer})} are zero, except 
\begin{equation*}
\begin{array}{l}
\hat{R}_{(1)21}^{(1)}=-\hat{R}_{(1)11}^{(2)}=a+\dfrac{1}{2}\left(
1-F_{PP}-F_{PQ}\right) ,\medskip \\ 
\hat{R}_{(1)22}^{(1)}=-\hat{R}_{(1)12}^{(2)}=\dfrac{1}{2}\left(
a-F_{PQ}-F_{QQ}\right) =\dfrac{1}{2}\left( a-F_{PQ}\right) ,%
\end{array}%
\end{equation*}%
where%
\begin{equation*}
F_{PP}=-\dfrac{2hkPQ\left( 3-kP^{2}\right) }{\left( 1+kP^{2}\right) ^{3}},%
\text{ }F_{PQ}=\dfrac{h\left( 1-kP^{2}\right) }{\left( 1+kP^{2}\right) ^{2}}%
\text{ and }F_{QQ}=0
\end{equation*}%
are the second partial derivatives of the function $F$.

(iv) All adapted components of the \textbf{curvature} d-tensor \textbf{\^{R}}
of the canonical generalized Cartan connection $C\hat{\Gamma}$ \textbf{%
produced by the cancer cell population flow (\ref{cancer})} vanish.

(v) The \textbf{geometric electromagnetic distinguished 2-form produced by
the cancer cell population flow (\ref{cancer})} has the expression 
\begin{equation*}
\mathbf{\hat{F}}=\hat{F}_{(i)j}^{(1)}\delta x_{1}^{i}\wedge dx^{j},
\end{equation*}%
where 
\begin{equation*}
\delta x_{1}^{i}=dx_{1}^{i}+\hat{N}_{(1)k}^{(i)}dx^{k},\text{ }\forall \text{
}i=\overline{1,2},
\end{equation*}%
and the adapted components $\hat{F}_{(i)j}^{(1)},$ $i,j=\overline{1,2},$ are
given by%
\begin{equation*}
\begin{array}{l}
\hat{F}_{(1)1}^{(1)}=\hat{F}_{(2)2}^{(1)}=0,\medskip \\ 
\hat{F}_{(2)1}^{(1)}=-\hat{F}_{(1)2}^{(1)}=\dfrac{1}{2}\left[ \left(
2a+1\right) P+aQ-\left( F_{P}+F_{Q}\right) \right] =\medskip \\ 
=\dfrac{1}{2}\left[ \left( 2a+1\right) P+aQ-\dfrac{hQ\left( 1-kP^{2}\right) 
}{\left( 1+kP^{2}\right) ^{2}}-\dfrac{hP}{1+kP^{2}}\right] .%
\end{array}%
\end{equation*}

(vi) The \textbf{biologic geometrical Yang-Mills energy produced by the
cancer cell population flow (\ref{cancer})} is given by the formula 
\begin{equation*}
\mathbf{EYM}^{\text{cancer}}(P,Q)=\frac{1}{4}\left[ \left( 2a+1\right) P+aQ-%
\dfrac{hQ\left( 1-kP^{2}\right) }{\left( 1+kP^{2}\right) ^{2}}-\dfrac{hP}{%
1+kP^{2}}\right] ^{2}.
\end{equation*}
\end{theorem}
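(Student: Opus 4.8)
The strategy is to recognize this as the specialization of Theorem \ref{MainTh} to the d-tensor field determined by the right-hand side of the cancer cell population flow (\ref{cancer}); all seven assertions then follow by explicit computation once the Jacobian matrix of that d-tensor field is in hand, and no new geometry is needed. First I would read off the field: setting $(x^{1},x^{2})=(P,Q)$, the system (\ref{cancer}) gives $X=(X_{(1)}^{(1)},X_{(1)}^{(2)})$ with
\begin{equation*}
X_{(1)}^{(1)}=P-P(P+Q)+F(P,Q),\qquad X_{(1)}^{(2)}=-rQ+aP(P+Q)-F(P,Q).
\end{equation*}
Next I would compute the Jacobian $J(X_{(1)})=\left(\partial X_{(1)}^{(i)}/\partial x^{j}\right)$, whose four entries are
\begin{equation*}
\begin{array}{ll}
\partial_{P}X_{(1)}^{(1)}=1-2P-Q+F_{P}, & \partial_{Q}X_{(1)}^{(1)}=-P+F_{Q},\medskip \\
\partial_{P}X_{(1)}^{(2)}=2aP+aQ-F_{P}, & \partial_{Q}X_{(1)}^{(2)}=-r+aP-F_{Q}.
\end{array}
\end{equation*}

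By part (i) of Theorem \ref{MainTh}, the nonlinear connection is $\hat{N}_{(1)}=-\tfrac{1}{2}\left[J(X_{(1)})-{}^{T}J(X_{(1)})\right]$. Its diagonal vanishes, giving $\hat{N}_{(1)1}^{(1)}=\hat{N}_{(1)2}^{(2)}=0$, while the single independent off-diagonal entry is
\begin{equation*}
\hat{N}_{(1)2}^{(1)}=-\tfrac{1}{2}\left(\partial_{Q}X_{(1)}^{(1)}-\partial_{P}X_{(1)}^{(2)}\right)=\tfrac{1}{2}\left[(2a+1)P+aQ-(F_{P}+F_{Q})\right],
\end{equation*}
with $\hat{N}_{(1)1}^{(2)}=-\hat{N}_{(1)2}^{(1)}$ forced by antisymmetry; inserting the stated forms of $F_{P}$ and $F_{Q}$ yields the closed expression of (i). Parts (ii) and (iv) require no computation at all: Theorem \ref{MainTh}(ii),(iv) already guarantee that the canonical generalized Cartan connection and its curvature d-tensor vanish for any d-tensor field.

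For the torsion in (iii) I would invoke Theorem \ref{MainTh}(iii), which equates $\hat{R}_{(1)k}$ with the entrywise derivative $\partial[\hat{N}_{(1)}]/\partial x^{k}$. Since the only nonzero entries of $\hat{N}_{(1)}$ are $\pm\hat{N}_{(1)2}^{(1)}$, differentiating that one expression with respect to $P$ and $Q$ produces
\begin{equation*}
\hat{R}_{(1)21}^{(1)}=\partial_{P}\hat{N}_{(1)2}^{(1)}=a+\tfrac{1}{2}\left(1-F_{PP}-F_{PQ}\right),\qquad \hat{R}_{(1)22}^{(1)}=\partial_{Q}\hat{N}_{(1)2}^{(1)}=\tfrac{1}{2}\left(a-F_{PQ}-F_{QQ}\right),
\end{equation*}
together with their antisymmetric partners $\hat{R}_{(1)11}^{(2)}$ and $\hat{R}_{(1)12}^{(2)}$. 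Finally, (v) is the pointwise identity $\hat{F}^{(1)}=-\hat{N}_{(1)}$ of Theorem \ref{MainTh}(v), and (vi) is the single surviving summand of Theorem \ref{MainTh}(vii), namely $\mathbf{EYM}=\left[\hat{F}_{(1)2}^{(1)}\right]^{2}=\tfrac{1}{4}\left[(2a+1)P+aQ-(F_{P}+F_{Q})\right]^{2}$.

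The only genuinely computational point is the bookkeeping of the derivatives of the rational nonlinearity $F(P,Q)=hPQ/(1+kP^{2})$: the first derivatives $F_{P},F_{Q}$ enter (i), (v) and (vi), while the second derivatives $F_{PP},F_{PQ},F_{QQ}$ must be computed carefully for (iii). In particular I would check that $F_{QQ}=0$, since $F$ is linear in $Q$; this collapses the second torsion component to $\tfrac{1}{2}(a-F_{PQ})$, matching the stated simplification. Beyond this, the proof is a direct transcription of the general result.
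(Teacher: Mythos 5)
Your proposal is correct and follows essentially the same route as the paper: the authors likewise regard the flow (\ref{cancer}) as a particular case of the system (\ref{DEs1}) with $x^{1}=P$, $x^{2}=Q$, write down the same Jacobian matrix $J\left(X_{(1)}\right)$, and then simply invoke Theorem \ref{MainTh}. Your write-up is in fact more explicit than the paper's (which stops at the Jacobian and says ``we obtain what we were looking for''), but the underlying argument is identical.
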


\begin{proof}
We regard the cancer cell population flow (\ref{cancer}) as a particular
case of the jet first order ODEs system (\ref{DEs1}) on the 1-jet space $%
J^{1}(T,\mathbb{R}^{2})$, with 
\begin{equation*}
n=2,\text{ }x^{1}=P,\text{ }x^{2}=Q
\end{equation*}%
and 
\begin{eqnarray*}
X_{(1)}^{(1)}(x^{1},x^{2}) &=&x^{1}-x^{1}(x^{1}+x^{2})+F(x^{1},x^{2})\text{ ,%
} \\
X_{(1)}^{(2)}(x^{1},x^{2}) &=&-rx^{2}+ax^{1}(x^{1}+x^{2})-F(x^{1},x^{2}).
\end{eqnarray*}

Now, using the Theorem \ref{MainTh} and taking into account that we have the
Jacobian matrix 
\begin{eqnarray*}
J\left( X_{(1)}\right) &=&\left( 
\begin{array}{cc}
1-2P-Q+F_{P} & -P+F_{Q} \\ 
2aP+aQ-F_{P} & -r+aP-F_{Q}%
\end{array}%
\right) \\
&=&\left( 
\begin{array}{cc}
1-2P-Q+\dfrac{hQ\left( 1-kP^{2}\right) }{\left( 1+kP^{2}\right) ^{2}} & -P+%
\dfrac{hP}{1+kP^{2}} \\ 
2aP+aQ-\dfrac{hQ\left( 1-kP^{2}\right) }{\left( 1+kP^{2}\right) ^{2}} & 
-r+aP-\dfrac{hP}{1+kP^{2}}%
\end{array}%
\right) ,
\end{eqnarray*}%
we obtain what we were looking for.
\end{proof}

\begin{remark}[Open problem]
The \textbf{Yang-Mills biologic e\-ner\-ge\-ti\-cal \linebreak curves of
constant level produced by the cancer cell population flow (\ref{cancer})},
which are different by the empty set, are in the plane $POQ$ the curves of
implicit equations 
\begin{equation*}
\mathcal{C}_{C}:\left[ \left( 2a+1\right) P+aQ-\dfrac{hQ\left(
1-kP^{2}\right) }{\left( 1+kP^{2}\right) ^{2}}-\dfrac{hP}{1+kP^{2}}\right]
^{2}=4C,
\end{equation*}%
where $C\geq 0.$ For instance, the \textbf{zero Yang-Mills biologic
e\-ner\-ge\-ti\-cal curve produced by the cancer cell population flow (\ref%
{cancer})} is in the plane $POQ$ the graph of a rational function:%
\begin{equation*}
\mathcal{C}_{0}:Q=\frac{P\left( 1+kP^{2}\right) \left[ h-\left( 2a+1\right)
\left( 1+kP^{2}\right) \right] }{a\left( 1+kP^{2}\right) ^{2}-h\left(
1-kP^{2}\right) }.
\end{equation*}

As a possible opinion, we consider that if the cancer cell population flow
does not generate any Yang-Mills biologic energies, then it is to be
expected that the variables $P$ and $Q$ vary along the rational curve $%
\mathcal{C}_{0}$. Otherwise, if the cancer cell population flow generates an
Yang-Mills biologic energy, then it is possible that the shapes of the
constant Yang-Mills biologic e\-ner\-ge\-ti\-cal curves $\mathcal{C}_{C}$ to
offer useful interpretations for biologists.
\end{remark}

\section{The jet Riemann-Lagrange geometry of the infection by human
immunodeficiency virus (HIV-1) evolution model}

\hspace{4mm}It is well known that the major target of HIV infection is a
class of lymphocytes, or white blood cells, known as $CD4^{+}$ T cells.
These cells secrete growth and differentiations factors that are required by
other cell populations in the immune system, and hence these cells are also
called \textit{\textquotedblright helper T cells\textquotedblright }. After
becoming infected, the $CD4^{+}$ T cells can produce new HIV virus particles
(or virions) so, in order to model HIV infection it was introduced a
population of uninfected target cells $T$, and productively infected cells $%
T^{\ast }$.

Over the past decade, a number of models have been developed to describe the
immune system, its interaction with HIV, and the decline in $CD4^{+}$ T
cells. We propose for geometrical investigation a model that incorporates
viral production (for more details, please see [8], [12]). This mathematical
model of infection by HIV-1 relies on the variables $T(t)$ - \textit{the
population of uninfected target cells}, $T^{\ast }(t)$ - \textit{the
population of productively infected cells}, and $V(t)$ - \textit{the HIV-1
virus}, whose evolution in time is given by the \textit{HIV-1 flow} [12] 
\begin{equation}
\left\{ 
\begin{array}{l}
\dfrac{dT}{dt}=s+(p-d)T-\dfrac{pT^{2}}{m}-kVT\medskip \\ 
\dfrac{dT^{\ast }}{dt}=kTV-\delta T^{\ast }\medskip \\ 
\dfrac{dV}{dt}=n\delta T^{\ast }-cV,%
\end{array}%
\right.  \label{HIV}
\end{equation}%
where

\begin{itemize}
\item $s$ represents the rate at which new $T$ cells are created from
sources within the body, such as thymus;

\item $p$ is the maximum proliferation rate of $T$ cells;

\item $d$ is the death rate per $T$ cells;

\item $\delta $ represents the death rate for infected cells $T^{*}$;

\item $m$ is the $T$ cells population density at which proliferation shuts
off;

\item $k$ is the infection rate;

\item $n$ represents the total number of virions produced by a cell during
its lifetime;

\item $c$ is the rate of clearance of virions.
\end{itemize}

In what follows, we apply our jet Riemann-Lagrange geometrical results to
the \textit{HIV-1 flow} (\ref{HIV}) regarded on the 1-jet space $J^{1}(T,%
\mathbb{R}^{3})$. In this context, we obtain:

\begin{theorem}
(i) The \textbf{canonical nonlinear connection on }$J^{1}(T,\mathbb{R}^{3})$%
\textbf{\ produced by the HIV-1 flow (\ref{HIV})} has the local components 
\begin{equation*}
\check{\Gamma}=\left( 0,\check{N}_{(1)j}^{(i)}\right) ,\text{ }i,j=\overline{%
1,3},
\end{equation*}%
where $\check{N}_{(1)j}^{(i)}$ are the entries of the matrix 
\begin{equation*}
\check{N}_{(1)}=-\frac{1}{2}\left( 
\begin{array}{ccc}
0 & -kV & -kT \\ 
kV & 0 & kT-n\delta \\ 
kT & -kT+n\delta & 0%
\end{array}%
\right) .
\end{equation*}

(ii) All adapted components of the \textbf{canonical generalized Cartan
connection }$C\check{\Gamma}$\textbf{\ produced by the HIV-1 flow (\ref{HIV})%
} vanish.

(iii) All adapted components of the \textbf{torsion} d-tensor \textbf{\v{T}}
of the canonical generalized Cartan connection $C\check{\Gamma}$ \textbf{%
produced by the HIV-1 flow (\ref{HIV})} vanish, except the entries of the
matrices 
\begin{equation*}
\check{R}_{(1)1}=\left( 
\begin{array}{ccc}
0 & 0 & k/2 \\ 
0 & 0 & -k/2 \\ 
-k/2 & k/2 & 0%
\end{array}%
\right)
\end{equation*}%
and 
\begin{equation*}
\check{R}_{(1)3}=\left( 
\begin{array}{ccc}
0 & k/2 & 0 \\ 
-k/2 & 0 & 0 \\ 
0 & 0 & 0%
\end{array}%
\right) ,
\end{equation*}%
where 
\begin{equation*}
\check{R}_{(1)k}=\left( \check{R}_{(1)jk}^{(i)}\right) _{i,j=\overline{1,3}},%
\text{ }\forall \text{ }k\in \left\{ 1,3\right\} .
\end{equation*}

(iv) All adapted components of the \textbf{curvature} d-tensor \textbf{\v{R}}
of the canonical generalized Cartan connection $C\check{\Gamma}$ \textbf{%
produced by the HIV-1 flow (\ref{HIV})} vanish.

(v) The \textbf{geometric electromagnetic distinguished 2-form produced by
the HIV-1 flow (\ref{HIV})} has the expression 
\begin{equation*}
\mathbf{\check{F}}=\check{F}_{(i)j}^{(1)}\delta x_{1}^{i}\wedge dx^{j},
\end{equation*}%
where 
\begin{equation*}
\delta x_{1}^{i}=dx_{1}^{i}+\check{N}_{(1)k}^{(i)}dx^{k},\text{ }\forall 
\text{ }i=\overline{1,3},
\end{equation*}%
and the adapted components $\check{F}_{(i)j}^{(1)}$ $i,j=\overline{1,3},$
are the entries of the matrix 
\begin{equation*}
\check{F}^{(1)}=\frac{1}{2}\left( 
\begin{array}{ccc}
0 & -kV & -kT \\ 
kV & 0 & kT-n\delta \\ 
kT & -kT+n\delta & 0%
\end{array}%
\right) .
\end{equation*}

(vi) The \textbf{biologic geometric Yang-Mills energy produced by the HIV-1
flow (\ref{HIV})} is given by the formula 
\begin{equation*}
\mathbf{EYM}^{\text{HIV-1}}(T,T^{\ast },V)=\frac{1}{4}\left[
k^{2}(V^{2}+T^{2})+(kT-n\delta )^{2}\right].
\end{equation*}
\end{theorem}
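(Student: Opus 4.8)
The plan is to treat the HIV-1 flow (\ref{HIV}) as a concrete instance of the general jet first order ODEs system (\ref{DEs1}) on $J^{1}(T,\mathbb{R}^{3})$ and then simply invoke the master result, Theorem \ref{MainTh}, which already expresses every geometrical object (nonlinear connection, Cartan connection, torsion, curvature, electromagnetic $2$-form, Yang-Mills energy) purely in terms of the Jacobian matrix $J(X_{(1)})$ of the d-tensor field $X=(X_{(1)}^{(i)})$. Thus the whole proof reduces to correctly identifying the components $X_{(1)}^{(i)}$, computing $J(X_{(1)})$, and substituting into the six explicit formulas of Theorem \ref{MainTh}. Concretely, I would first set $n=3$, $x^{1}=T$, $x^{2}=T^{\ast}$, $x^{3}=V$, and read off
\begin{equation*}
X_{(1)}^{(1)}=s+(p-d)T-\frac{pT^{2}}{m}-kVT,\quad X_{(1)}^{(2)}=kTV-\delta T^{\ast},\quad X_{(1)}^{(3)}=n\delta T^{\ast}-cV.
\end{equation*}

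Next I would compute the Jacobian $J(X_{(1)})=\left(\partial X_{(1)}^{(i)}/\partial x^{j}\right)$, a $3\times 3$ matrix whose entries are elementary partial derivatives. The key observation that makes everything collapse is that the nonlinear connection is, by part (i) of Theorem \ref{MainTh}, the negative halved antisymmetric part $N_{(1)}=-\tfrac12[J(X_{(1)})-{}^{T}J(X_{(1)})]$; therefore only the off-diagonal differences of the Jacobian survive, and all the diagonal terms (the $(p-d)$, $-2pT/m$, $-\delta$, $-c$ pieces) cancel identically. The surviving off-diagonal combinations are $\partial X_{(1)}^{(1)}/\partial x^{3}-\partial X_{(1)}^{(3)}/\partial x^{1}=-kT$ type expressions; carefully tracking signs gives precisely the antisymmetric matrix displayed in (i) with entries built from $-kV$, $-kT$, and $kT-n\delta$. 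Once $\check{N}_{(1)}$ is in hand, parts (ii), (iv), (v) are immediate: (ii) and (iv) follow because Theorem \ref{MainTh} guarantees the Cartan connection components and curvature vanish universally for any system of the form (\ref{DEs1}), and (v) follows because $\check{F}^{(1)}=-\check{N}_{(1)}$ by the formula in Theorem \ref{MainTh}(v).

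For part (iii) I would apply Theorem \ref{MainTh}(iii), which says the torsion matrices are $\check{R}_{(1)k}=\partial[\check{N}_{(1)}]/\partial x^{k}$. Since $\check{N}_{(1)}$ depends on the spatial coordinates only through $V=x^{3}$ and $T=x^{1}$ (the entry $kT-n\delta$ depends on $T$, the entries $kV,kT$ on $V,T$ respectively), differentiating with respect to $x^{1}=T$ and $x^{3}=V$ produces the two constant matrices $\check{R}_{(1)1}$ and $\check{R}_{(1)3}$ stated, while $\check{R}_{(1)2}=\partial[\check{N}_{(1)}]/\partial T^{\ast}=0$. Finally, part (vi) is obtained from Theorem \ref{MainTh}(vii): the Yang-Mills energy is $\tfrac12\,\mathrm{Trace}[\check{F}^{(1)}\cdot{}^{T}\check{F}^{(1)}]=\sum_{i<j}[\check{F}_{(i)j}^{(1)}]^{2}$, so I just sum the squares of the three independent upper-triangular entries $\tfrac12(-kV)$, $\tfrac12(-kT)$, $\tfrac12(kT-n\delta)$, which yields $\tfrac14[k^{2}V^{2}+k^{2}T^{2}+(kT-n\delta)^{2}]$, the claimed formula.

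I do not expect any genuine conceptual obstacle here, since the result is a direct specialization of the already-proven Theorem \ref{MainTh}; the entire content is bookkeeping. The only place where care is genuinely required — and thus the main potential pitfall — is the sign and index tracking when forming the antisymmetric combination $J(X_{(1)})-{}^{T}J(X_{(1)})$ and then reconciling it against the stated matrix, together with the convention linking $\check{R}_{(1)k}^{(i)}{}_{j}$ to the partial derivatives of $\check{N}_{(1)}$. Getting the factor of $-\tfrac12$ and the placement of the $n\delta$ term consistent across parts (i), (iii), (v), and (vi) is the one step where an error would propagate, so I would verify the off-diagonal entries of the Jacobian against the claimed matrix before substituting into the remaining formulas.
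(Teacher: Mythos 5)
Your proposal is correct and follows exactly the paper's own route: identify the HIV-1 flow as the special case $n=3$, $x^{1}=T$, $x^{2}=T^{\ast}$, $x^{3}=V$ of the jet system (\ref{DEs1}), compute the Jacobian $J(X_{(1)})$, and read off all six parts from Theorem \ref{MainTh}. Your additional bookkeeping (the antisymmetrization, the derivatives giving $\check{R}_{(1)1}$ and $\check{R}_{(1)3}$ with $\check{R}_{(1)2}=0$, and the sum of squared upper-triangular entries for the Yang-Mills energy) all checks out against the stated matrices.
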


\begin{proof}
Consider the HIV-1 flow (\ref{HIV}) as a particular case of the jet first
order ODEs system (\ref{DEs1}) on the 1-jet space $J^{1}(T,\mathbb{R}^{3})$,
with 
\begin{equation*}
n=3,\text{ }x^{1}=T,\text{ }x^{2}=T^{\ast },\text{ }x^{3}=V
\end{equation*}%
and%
\begin{equation*}
\begin{array}{l}
X_{(1)}^{(1)}(x^{1},x^{2},x^{3})=s+(p-d)x^{1}-\dfrac{p}{m}%
(x^{1})^{2}-kx^{3}x^{1},\medskip  \\ 
X_{(1)}^{(2)}(x^{1},x^{2},x^{3})=kx^{1}x^{3}-\delta x^{2},\medskip  \\ 
X_{(1)}^{(3)}(x^{1},x^{2},x^{3})=n\delta x^{2}-cx^{3}.%
\end{array}%
\end{equation*}%
It follows that we have the Jacobian matrix 
\begin{equation*}
J\left( X_{(1)}\right) =\left( 
\begin{array}{ccc}
p-d-\dfrac{2p}{m}T-kV & 0 & -kT\medskip  \\ 
kV & -\delta  & kT\medskip  \\ 
0 & n\delta  & -c%
\end{array}%
\right) .
\end{equation*}

In conclusion, using the Theorem \ref{MainTh}, we find the required result.
\end{proof}

\begin{remark}[Open problem]
The \textbf{Yang-Mills biologic energetical \linebreak surfaces of constant
level produced by the HIV-1 flow (\ref{HIV})} have in the system of axis $%
OTT^{\ast }V$ the implicit equations 
\begin{equation*}
\Sigma _{C}:k^{2}(V^{2}+T^{2})+(kT-n\delta )^{2}=4C,
\end{equation*}%
where $C\geq 0.$ It is obvious that the surfaces $\Sigma _{C}$ are some real
or imaginar \textbf{cylinders}. Taking into account that the family of conics%
\begin{equation*}
\Gamma _{C}:2k^{2}T^{2}+k^{2}V^{2}-2kn\delta T+n^{2}\delta ^{2}-4C=0,
\end{equation*}%
which generate the cylinders $\Sigma _{C}$, have the matrices%
\begin{equation*}
A=\left( 
\begin{array}{ccc}
2k^{2} & 0 & -kn\delta \\ 
0 & k^{2} & 0 \\ 
-kn\delta & 0 & n^{2}\delta ^{2}-4C%
\end{array}%
\right) ,
\end{equation*}%
it follows that their invariants are $\Delta _{C}=k^{4}\left( n^{2}\delta
^{2}-8C\right) ,$ $\delta =2k^{4}>0$ and $I=3k^{2}>0.$ As a consequence, we
have the following situations:

\begin{enumerate}
\item If $0\leq C<\dfrac{n^{2}\delta ^{2}}{8}$, then we have the \textbf{%
empty set} $\Sigma _{0\leq C<\frac{n^{2}\delta ^{2}}{8}}=\emptyset $;

\item If $C=\dfrac{n^{2}\delta ^{2}}{8}$, then the surface $\Sigma _{C=\frac{%
n^{2}\delta ^{2}}{8}}$ degenerates into the \textbf{straight line }%
\begin{equation*}
\Sigma _{C=\frac{n^{2}\delta ^{2}}{8}}:\left\{ 
\begin{array}{l}
T=\dfrac{n\delta }{2k}\medskip \\ 
V=0%
\end{array}%
\right. ;
\end{equation*}

\item If $C>\dfrac{n^{2}\delta ^{2}}{8}$, then the surface $\Sigma _{C>\frac{%
n^{2}\delta ^{2}}{8}}$ is a \textbf{right elliptic cylinder} of equation%
\begin{equation*}
\Sigma _{C>\frac{n^{2}\delta ^{2}}{8}}:\frac{\left( T-\dfrac{n\delta }{2k}%
\right) ^{2}}{a^{2}}+\frac{V^{2}}{b^{2}}=1,\text{ }T^{\ast }\in \mathbb{R},
\end{equation*}%
where $a<b$ are given by%
\begin{equation*}
a=\frac{\sqrt{8C-n^{2}\delta ^{2}}}{2k},\text{ }b=\frac{\sqrt{8C-n^{2}\delta
^{2}}}{k\sqrt{2}}.
\end{equation*}%
Obviously, it has as axis of symmetry the straight line $\Sigma _{C=\frac{%
n^{2}\delta ^{2}}{8}}$.
\end{enumerate}

There exist possible valuable informations for biologists contained in the
shapes of the Yang-Mills energetical constant surfaces $\Sigma _{C}$?
\end{remark}

Ileana Rodica NICOLA

\textbf{Author's adress:} University \textquotedblright
Politehnica\textquotedblright\ of Bucharest, Faculty of Applied Sciences,
Department of Mathematics I, Splaiul Independen\c{t}ei, No. 313, RO-060042
Bucharest, Romania.

\textbf{E-mail address:} nicola\_rodica@yahoo.com

\bigskip

Mircea NEAGU

\textbf{Author's adress: }Str. L\u{a}m\^{a}i\c{t}ei, Nr. 66, Bl. 93, Sc. G,
Ap. 10, Bra\c{s}ov, BV 500371, Romania.

\textbf{E-mail address:} mirceaneagu73@yahoo.com

\textbf{Place of work:} University \textquotedblright
Transilvania\textquotedblright\ of Bra\c{s}ov, Faculty of Mathematics and
Informatics

\end{document}